\newcommand{\bi}{\begin{itemize}}  
\newcommand{\ei}{\end{itemize}}     
\newcommand{\bc}{\begin{center}}  
\newcommand{\ec}{\end{center}}     
\newcommand{\ls}[1]
   {\dimen0=\fontdimen6\the\font \lineskip=#1\dimen0
   \advance\lineskip.5\fontdimen5\the\font \advance\lineskip-\dimen0
   \lineskiplimit=.9\lineskip \baselineskip=\lineskip
   \advance\baselineskip\dimen0 \normallineskip\lineskip
   \normallineskiplimit\lineskiplimit \normalbaselineskip\baselineskip
   \ignorespaces }
\numberwithin{equation}{section}
\newtheorem{lemma}{Lemma}[section]
\newtheorem{theorem}[lemma]{Theorem}
\newtheorem{corollary}[lemma]{Corollary}
\newtheorem{definition}[lemma]{Definition}
\newtheorem{example}[lemma]{Example}
\newtheorem{remark}[lemma]{Remark}
\def\X{\mathbb{X}}
\def\U{\mathbb{U}}
\def\K{\mathbb{K}}
\def\Y{\mathbb{Y}}
\def\S{\mathbb{S}}
\def\R{\mathbb{R}}
\def\F{\mathbb{F}}
\title{Berge's Theorem for Noncompact Image Sets}
\begin{document}
\date{}
\maketitle

\begin{center}
  Eugene~A.~Feinberg \footnote{Department of Applied Mathematics and
Statistics,
 State University of New York at Stony Brook,
Stony Brook, NY 11794-3600, USA, eugene.feinberg@sunysb.edu}, \
Pavlo~O.~Kasyano${\rm v}^2$, 
 and Nina~V.~Zadoianchuk\footnote{Institute for
Applied System Analysis, National Technical University of Ukraine
``Kyiv Polytechnic Institute'', Peremogy ave., 37, build, 35,
03056, Kyiv, Ukraine, kasyanov@i.ua, ninellll@i.ua.
} \\

\smallskip
\end{center}

\begin{abstract}
For an upper semi-continuous set-valued mapping from one
topological space to another and for a lower semi-continuous
function defined on the product of these spaces, Berge's theorem
states lower semi-continuity of the minimum of this function taken
over the image sets.  It assumes that the image sets are compact.
For Hausdorff topological spaces, this paper extends Berge's
theorem to set-valued mappings with possible noncompact image sets
and studies relevant properties of minima.
%
%
\end{abstract}

\section{Introduction} Let $\X$ and $\Y$ be
Hausdorff topological spaces, $u:\X\times \Y\to
\overline{\mathbb{R}}=\mathbb{R}\cup\{\pm\infty\}$ and
$\Phi:\X\to2^{\Y}\setminus \{\emptyset\}$. For $x\in\X$ define
\begin{equation}\label{eq:lm10}
v(x):=\inf\limits_{y\in {\Phi}(x)}u(x,y).
\end{equation}
For Hausdorff topological spaces, the well-known Berge's Theorem
(cf. Berge \cite[Theorem~2, p.~116]{Ber}) has the following formulation.\\
\vskip -0.5 cm {\bf Berge's Theorem {\rm (Hu and
Papageorgiou~\cite[Proposition~3.3, p.~83]{Hu}}).} \textit{If
$u:\X\times\Y\to\overline{\mathbb{R}}$ is a lower semi-continuous
function and $\Phi:\X\to 2^{\Y}\setminus\{\emptyset\}$ is a
compact-valued upper semi-continuous set-valued mapping, then the
function $v:\X\to\overline{\mathbb{R}}$ is lower semi-continuous.}


Luque-V\'asquez and Hern\'andez-Lerma \cite{LVHL} provide an
example of a continuous $\Phi$ with possible noncompact sets
$\Phi(x)$ and of a lower semi-continuous function $u(x,y)$ being
inf-compact in $y$, when $v(x)$ is not lower semi-continuous. In
this paper, we extend Berge's theorem to possibly noncompact sets
$\Phi(x),$ $x\in\X.$ Let ${\rm Gr}_Z(\Phi)=\{(x,y)\in
Z\times\Y\,:\, y\in\Phi(x)\}$, where $Z\subseteq\X.$ For a
topological space $\U$, we denote by $\K(\U)$ \emph{ the family of
all nonempty compact subsets
of     \ $\U.$} 

\begin{definition}
A function $u:\X\times \Y\to \overline{\mathbb{R}}$ is called
$\K$-inf-compact on ${\rm Gr}_{\X}(\Phi)$, 
if for every $K\in \K(\X)$ this function is inf-compact on  ${\rm
Gr}_K(\Phi)$.
\end{definition}

The following theorem is the main result of this paper.

\begin{theorem}\label{MT1}
If the function $u:\X\times \Y\to \overline{\mathbb{R}}$ is
$\K$-inf-compact on ${\rm Gr}_{\X}(\Phi)$, then the function
$v:\X\to\overline{\mathbb{R}}$ is lower semi-continuous.
\end{theorem}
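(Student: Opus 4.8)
The plan is to prove the equivalent statement that every sublevel set $\{x\in\X:v(x)\le\lambda\}$, $\lambda\in\R$, is closed, arguing by contradiction. Suppose $v$ is not lower semi-continuous at some $x^*\in\X$; then there is a real $\lambda<v(x^*)$ such that every open neighbourhood $N$ of $x^*$ contains a point $x_N$ with $v(x_N)\le\lambda$. Ordering the open neighbourhoods of $x^*$ by reverse inclusion turns $(x_N)$ into a net converging to $x^*$. Fix a real $\mu$ with $\lambda<\mu<v(x^*)$ and, for each $N$, choose $y_N\in\Phi(x_N)$ with $u(x_N,y_N)\le\mu$, which is possible since $v(x_N)\le\lambda<\mu$. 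Thus $(x_N,y_N)\in{\rm Gr}_\X(\Phi)$ and $u(x_N,y_N)\le\mu$ for every $N$.

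The single place where the hypothesis enters is: for any compact $K\subseteq\X$ and any $\gamma\in\R$, the set $S_K^\gamma:=\{(x,y)\in{\rm Gr}_K(\Phi):u(x,y)\le\gamma\}$ is compact, hence closed in $\X\times\Y$ (both spaces are Hausdorff), so its projection onto $\X$ is a compact subset of $\X$ and its projection onto $\Y$ is a compact subset of $\Y$. Intersecting the $\X$-projections of $S_K^\gamma$ over $\gamma>\lambda$ yields $\{x\in K:v(x)\le\lambda\}$, so as a by-product the restriction of $v$ to every compact subset of $\X$ is already lower semi-continuous.

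The core of the proof is to promote the net $(x_N,y_N)$ to a convergent subnet. For each open neighbourhood $N$ of $x^*$ put $F_N:=\{(x,y)\in{\rm Gr}_\X(\Phi):x\in\overline N,\ u(x,y)\le\mu\}$. Each $F_N$ is nonempty, since it contains $(x_M,y_M)$ for any $M$ with $x_M\in N$, and since $\overline{N_1\cap N_2}\subseteq\overline{N_1}\cap\overline{N_2}$ the family $\{F_N\}$ is downward directed and has the finite intersection property. The next step is to confine these sets to a single compact subset of $\X\times\Y$ — this is where $\K$-inf-compactness over \emph{every} compact $K$ is needed — after which $\bigcap_N F_N\ne\emptyset$. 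Any point $(x',y')$ of this intersection satisfies $x'\in\bigcap_N\overline N=\{x^*\}$, the last equality holding because $\X$ is Hausdorff; hence $x'=x^*$, and membership of $(x^*,y')$ in a single $F_{N_0}$ forces $y'\in\Phi(x^*)$ and $u(x^*,y')\le\mu$. Then
\[
v(x^*)=\inf_{y\in\Phi(x^*)}u(x^*,y)\le u(x^*,y')\le\mu<v(x^*),
\]
a contradiction, so $v$ is lower semi-continuous.

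The hard part is exactly the compactness step: pinning a cofinal portion of the net $(x_N,y_N)$ — equivalently, pinning the sets $F_N$ — inside one compact subset of $\X\times\Y$. Since $\X$ is only assumed Hausdorff, it need not be locally compact, and the trace $\{x_N\}$ of a convergent net need not have compact closure, so the compact set cannot simply be $\overline{\{x_N\}\cup\{x^*\}}$; one must instead exploit that inf-compactness of $u$ is available over ${\rm Gr}_K(\Phi)$ for \emph{all} compact $K$ in order to produce a suitable $K$ and to reduce the neighbourhood net to a tractable one. Everything else — closedness of the compact sublevel sets of $u$, the behaviour of projections, and the identity $\bigcap_N\overline N=\{x^*\}$ — is routine.
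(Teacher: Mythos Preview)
Your overall strategy coincides with the paper's: argue by contradiction, produce a net $x_\alpha\to x^*$ in a sublevel set of $v$, lift it to a net $(x_\alpha,y_\alpha)$ in a $\mu$-sublevel set of $u$ on ${\rm Gr}_{\X}(\Phi)$, invoke $\K$-inf-compactness to trap this net in a compact set, and extract a cluster point $(x^*,y')$ with $u(x^*,y')\le\mu<v(x^*)$.

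The genuine gap is that you never carry out the step you yourself label ``the hard part.'' You say the $F_N$ must be confined to a single compact subset of $\X\times\Y$, that ``this is where $\K$-inf-compactness over every compact $K$ is needed,'' and that one must ``produce a suitable $K$'' --- but no $K$ is ever produced and no argument is given. Since the entire contradiction hinges on $\bigcap_N F_N\ne\emptyset$, the proof as written is incomplete at exactly its load-bearing point.

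For comparison, the paper does not introduce your family $\{F_N\}$ at all. Having obtained the net $x_\alpha\to x^*$, it simply sets
\[
K=\Bigl(\bigcup_\alpha\{x_\alpha\}\Bigr)\cup\{x^*\}
\]
and declares this set compact; then $(x_\alpha,y_\alpha)$ lies in the compact level set $\mathcal{D}_{u}(\lambda;{\rm Gr}_K(\Phi))$, which yields the required cluster point immediately. In other words, the paper makes precisely the move you say ``cannot simply'' be made. Your caution here is not baseless --- the range of an arbitrary convergent net in a Hausdorff space, together with its limit, need \emph{not} be compact --- so the paper's one-line assertion is itself delicate. But whether you accept that assertion or seek the alternative construction you allude to, the missing ingredient is the same compact $K$, and your proposal stops before supplying it.
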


\section{Properties of $\K$-inf-compact Functions and Proof of Theorem~\ref{MT1}}
 For an $\overline{\mathbb{R}}$-valued
function $f$, defined on a nonempty subset $U$ of a topological
space $\mathbb{U},$ consider the level sets
\begin{equation}\label{def-D}
\mathcal{D}_f(\lambda;U)=\{y\in U \, : \,  f(y)\le
\lambda\},\qquad \lambda\in\R.\end{equation}  We recall that a
function $f$ is \textit{lower semi-continuous on $U$} if all the
level sets $\mathcal{D}_f(\lambda;U)$
 are closed, and a function $f$ is
\textit{inf-compact  on $U$} if all these sets are
compact.
%
%
%
%
%

For an upper semi-continuous set-valued mapping ${\Phi}:\X\to
K(\Y)$, the set ${\rm Gr}_\X(\Phi)$ is closed; Berge~\cite[Theorem
6, p. 112]{Ber}. Therefore, for such $\Phi$, if a function
$u(\cdot,\cdot)$ is lower semi-continuous on $\X\times\Y$, then it
is lower semi-continuous on ${\rm Gr}_\X(\Phi)$. Thus,
Lemma~\ref{lm0}(i) implies that Theorem~\ref{MT1} is a natural
generalization of Berge's Theorem; see also Remark~\ref{r.2.4} and
Lemma~\ref{lm4}.
%
%
%

\begin{lemma}\label{lm0}
The following statements hold:

(i) if $u:\X\times\Y\to \overline{\mathbb{R}}$ is lower
semi-continuous on ${\rm Gr}_\X ({{\Phi}})$ and ${\Phi}:\X\to
K(\Y)$ is upper semi-continuous, then the function
$u(\cdot,\cdot)$ is $\K$-inf-compact on ${\rm Gr}_\X ({{\Phi}})$;

(ii) if $u:\X\times\Y\to \overline{\mathbb{R}}$ is inf-compact on
${\rm Gr}_\X(\Phi)$, then the function $u(\cdot,\cdot)$ is
$\K$-inf-compact on ${\rm Gr}_\X(\Phi)$.
\end{lemma}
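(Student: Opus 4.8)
The plan is to unwind both assertions directly from the definition of $\K$-inf-compactness: I must show that for every $K\in\K(\X)$ and every $\lambda\in\R$, the level set $\mathcal{D}_u(\lambda;{\rm Gr}_K(\Phi))=\{(x,y)\in{\rm Gr}_K(\Phi):u(x,y)\le\lambda\}$ is compact. For part (ii), this is essentially immediate: since $u$ is inf-compact on ${\rm Gr}_\X(\Phi)$, the set $\mathcal{D}_u(\lambda;{\rm Gr}_\X(\Phi))$ is compact; the set ${\rm Gr}_K(\Phi)$ is a closed subset of ${\rm Gr}_\X(\Phi)$ (it is the intersection of ${\rm Gr}_\X(\Phi)$ with $K\times\Y$, and $K$ is compact hence closed in the Hausdorff space $\X$); therefore $\mathcal{D}_u(\lambda;{\rm Gr}_K(\Phi))=\mathcal{D}_u(\lambda;{\rm Gr}_\X(\Phi))\cap({\rm Gr}_K(\Phi))$ is a closed subset of a compact set, hence compact. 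I would spell out the first equality and the closedness claim in one or two lines.

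For part (i), fix $K\in\K(\X)$ and $\lambda\in\R$. First I would argue that ${\rm Gr}_K(\Phi)$ is compact. Since $\Phi$ is upper semi-continuous and compact-valued, the restriction of $\Phi$ to the compact set $K$ has compact graph — this is the cited fact that for a compact-valued u.s.c. mapping the graph over a compact set is compact (a standard consequence of Berge \cite[Theorem~6, p.~112]{Ber}, combined with the fact that a closed subset of the compact set $K\times\bigcup_{x\in K}\Phi(x)$, the latter being compact as the u.s.c.\ image of a compact set, is compact). Next, $u$ is lower semi-continuous on ${\rm Gr}_\X(\Phi)$, hence its restriction to the subspace ${\rm Gr}_K(\Phi)$ is lower semi-continuous there, so the level set $\mathcal{D}_u(\lambda;{\rm Gr}_K(\Phi))$ is closed in ${\rm Gr}_K(\Phi)$. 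A closed subset of the compact set ${\rm Gr}_K(\Phi)$ is compact, which is exactly what we need; since $K$ and $\lambda$ were arbitrary, $u$ is $\K$-inf-compact on ${\rm Gr}_\X(\Phi)$.

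The only genuinely delicate point — the rest being routine topology — is the assertion that ${\rm Gr}_K(\Phi)$ is compact when $\Phi$ is compact-valued and upper semi-continuous and $K$ is compact. The excerpt quotes Berge's result that ${\rm Gr}_\X(\Phi)$ is \emph{closed}, but closedness of the whole graph plus compactness of $K\times\Y$-slices is not by itself enough; one needs in addition that $\Phi(K):=\bigcup_{x\in K}\Phi(x)$ is relatively compact, which follows because the image of a compact set under a compact-valued u.s.c. mapping is compact. I expect the write-up to invoke this image-compactness fact (also standard, e.g. Berge or Hu--Papageorgiou) explicitly, so that ${\rm Gr}_K(\Phi)$ appears as a closed subset (closed by the quoted graph-closedness, intersected with the closed set $K\times\overline{\Phi(K)}$) of the compact set $K\times\overline{\Phi(K)}$. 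With that in hand, both parts close cleanly.
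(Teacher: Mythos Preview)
Your proposal is correct and follows essentially the same approach as the paper: both parts hinge on writing the level set $\mathcal{D}_u(\lambda;{\rm Gr}_K(\Phi))$ as the intersection of a closed set with a compact set, and in part~(i) both you and the paper invoke the standard fact that $\Phi(K)$ is compact for a compact-valued u.s.c.\ mapping (Berge \cite[Theorem~3, p.~110]{Ber}). The only cosmetic difference is packaging: you first prove ${\rm Gr}_K(\Phi)$ itself is compact (using graph-closedness plus $\Phi(K)$ compact) and then take the level set inside it, whereas the paper works directly with the larger compact $K\times\Phi(K)$ and intersects it with the closed level set $\mathcal{D}_u(\lambda;{\rm Gr}_\X(\Phi))$---graph-closedness still enters, but implicitly, to ensure that this level set is closed in $\X\times\Y$ rather than merely in ${\rm Gr}_\X(\Phi)$.
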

\begin{proof} (i) Let $u:\X\times\Y\to \overline{\mathbb{R}}$ be lower
semi-continuous on ${\rm Gr}_{\X} ({{\Phi}})$ and ${\Phi}:\X\to
K(\Y)$ be upper semi-continuous. For $K\in\K(\X)$ and $\lambda\in
\R$, the level set $\mathcal{D}_{u(\cdot,\cdot)}(\lambda;{\rm
Gr}_K(\Phi))$ is compact. Indeed, since $K$ is a compact set and
the set-valued mapping ${\Phi}:\X\to K(\Y)$ is upper
semi-continuous, then Berge \cite[Theorem~3 on p. 110]{Ber}
implies that the image ${\Phi}(K)$ is also compact. Thus, $K\times
\Phi(K)$ is a  compact subset of $\X\times \Y$. In virtue of lower
semi-continuity of $u(\cdot,\cdot)$ on ${\rm Gr}_{\X}(\Phi),$ the
level set $\mathcal{D}_{u(\cdot,\cdot)}(\lambda;{\rm
Gr}_{\X}(\Phi))$ is closed. Therefore the level set
$\mathcal{D}_{u(\cdot,\cdot)}(\lambda;{\rm
Gr}_K(\Phi))=(K\times\Phi(K))\cap
\mathcal{D}_{u(\cdot,\cdot)}(\lambda;{\rm Gr}_{\X}(\Phi))$ is
compact.

(ii) Let $u(\cdot,\cdot)$ be an inf-compact function on ${\rm
Gr}_{\X}(\Phi)$, $K\in\K(\X)$, and $\lambda\in \R$. Since
$K\times\Y$ is a closed subset of $\X\times\Y$ and the set
$\mathcal{D}_{u(\cdot,\cdot)}(\lambda;{\rm Gr}_{\X}(\Phi))$ is
compact, the level set $\mathcal{D}_{u(\cdot,\cdot)}(\lambda;{\rm
Gr}_K(\Phi))=(K\times\Y)\cap
\mathcal{D}_{u(\cdot,\cdot)}(\lambda;{\rm Gr}_{\X}(\Phi))$ is
compact.
\end{proof}

\begin{lemma}\label{lm00101}
If $u(\cdot,\cdot)$ is $\K$-inf-compact function on ${\rm
Gr}_{\X}(\Phi)$, then for every $x\in\X$ the function $u(x,\cdot)$
is inf-compact on ${\Phi}(x)$.
\end{lemma}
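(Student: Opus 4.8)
The plan is to apply the hypothesis of $\K$-inf-compactness to the one-point set $K=\{x\}$. First I would observe that for any $x\in\X$ the singleton $\{x\}$ is a nonempty compact subset of $\X$, so $\{x\}\in\K(\X)$, and that ${\rm Gr}_{\{x\}}(\Phi)=\{x\}\times\Phi(x)$. By the definition of $\K$-inf-compactness, the function $u(\cdot,\cdot)$ is then inf-compact on $\{x\}\times\Phi(x)$; that is, for every $\lambda\in\R$ the level set
\[
\mathcal{D}_{u(\cdot,\cdot)}\bigl(\lambda;\{x\}\times\Phi(x)\bigr)=\{(x,y)\,:\,y\in\Phi(x),\ u(x,y)\le\lambda\}
\]
is a compact subset of $\X\times\Y$.

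Next I would transfer this compactness down to $\Y$ by means of the coordinate projection. Let $\pi_\Y:\X\times\Y\to\Y$ denote the (continuous) projection onto the second factor. Then $\pi_\Y$ carries $\mathcal{D}_{u(\cdot,\cdot)}(\lambda;\{x\}\times\Phi(x))$ onto $\mathcal{D}_{u(x,\cdot)}(\lambda;\Phi(x))=\{y\in\Phi(x)\,:\,u(x,y)\le\lambda\}$. Since the continuous image of a compact set is compact, the set $\mathcal{D}_{u(x,\cdot)}(\lambda;\Phi(x))$ is compact for every $\lambda\in\R$, which is exactly the assertion that $u(x,\cdot)$ is inf-compact on $\Phi(x)$.

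There is essentially no serious obstacle here; the only points that warrant (minor) care are that a one-point set is compact in an arbitrary topological space, so that the $\K$-inf-compactness hypothesis genuinely applies with $K=\{x\}$, and that compactness is preserved under $\pi_\Y$ (equivalently, that $y\mapsto(x,y)$ is a homeomorphism of $\Phi(x)$ onto $\{x\}\times\Phi(x)$), so that the level sets of $u(x,\cdot)$ in $\Y$ inherit compactness from those of $u(\cdot,\cdot)$ in $\X\times\Y$.
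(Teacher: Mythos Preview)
Your proposal is correct and follows essentially the same approach as the paper: apply $\K$-inf-compactness with the singleton $K=\{x\}$ and observe that the level set $\mathcal{D}_{u(x,\cdot)}(\lambda;\Phi(x))$ coincides (up to the canonical identification $y\leftrightarrow(x,y)$) with $\mathcal{D}_{u(\cdot,\cdot)}(\lambda;{\rm Gr}_{\{x\}}(\Phi))$. The paper simply writes this identification as an equality, while you make the passage explicit via the projection $\pi_\Y$; this extra care is harmless and arguably cleaner.
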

\begin{proof}
For an arbitrary $\lambda\in\R$ and an arbitrary fixed $x\in \X$,
consider the set
\[
\mathcal{D}_{u(x,\cdot)}(\lambda;\Phi(x))=\{y\in {\Phi}(x)\,:\,
u(x,y)\le\lambda\}= \mathcal{D}_{u(\cdot,\cdot)}(\lambda;{\rm
Gr}_{\{x\}}(\Phi)).
\]
$\K$-inf-compactness of $u(\cdot,\cdot)$ on ${\rm Gr}_{\X}(\Phi)$
implies, that this set is compact.
\end{proof}

\begin{lemma}\label{lsc}
A $\K$-inf-compact function $u(\cdot,\cdot)$ on ${\rm
Gr}_{\X}(\Phi)$ is lower semi-continuous on ${\rm Gr}_{\X}(\Phi)$.
\end{lemma}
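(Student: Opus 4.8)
The plan is to verify the definition directly: for every $\lambda\in\R$, show that the level set $\mathcal{D}_{u(\cdot,\cdot)}(\lambda;{\rm Gr}_{\X}(\Phi))$ is closed in the subspace ${\rm Gr}_{\X}(\Phi)$ of $\X\times\Y$. Since a subset of a topological space is closed precisely when it contains the limit of every convergent net that lies in it, it is enough to take an arbitrary net $(x_{\gamma},y_{\gamma})$ contained in $\mathcal{D}_{u(\cdot,\cdot)}(\lambda;{\rm Gr}_{\X}(\Phi))$ that converges to some point $(x_{0},y_{0})\in{\rm Gr}_{\X}(\Phi)$, and to prove that $u(x_{0},y_{0})\le\lambda$.

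First I would extract, from convergence of $(x_{\gamma},y_{\gamma})$ and continuity of the projection $\X\times\Y\to\X$, that $x_{\gamma}\to x_{0}$ in $\X$, and let $K$ be the set consisting of $x_{0}$ together with all the points $x_{\gamma}$. The step I expect to be the main obstacle is precisely showing that $K$ is a compact subset of $\X$, i.e. $K\in\K(\X)$: this is immediate when the net is a sequence — and more generally whenever $\X$ is first countable — but in a general Hausdorff space the compactness of the range of a convergent net together with its limit is the subtle point that must be handled with care.

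Granting $K\in\K(\X)$, the remainder is routine. For each $\gamma$ we have $x_{\gamma}\in K$, $y_{\gamma}\in\Phi(x_{\gamma})$ and $u(x_{\gamma},y_{\gamma})\le\lambda$, hence $(x_{\gamma},y_{\gamma})\in\mathcal{D}_{u(\cdot,\cdot)}(\lambda;{\rm Gr}_{K}(\Phi))$. By $\K$-inf-compactness of $u(\cdot,\cdot)$ on ${\rm Gr}_{\X}(\Phi)$ this set is compact, and therefore closed in $\X\times\Y$ because $\X\times\Y$ is Hausdorff. A closed set contains the limits of all nets lying in it, so $(x_{0},y_{0})\in\mathcal{D}_{u(\cdot,\cdot)}(\lambda;{\rm Gr}_{K}(\Phi))\subseteq\mathcal{D}_{u(\cdot,\cdot)}(\lambda;{\rm Gr}_{\X}(\Phi))$; in particular $u(x_{0},y_{0})\le\lambda$, which is what was needed. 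As $\lambda$ and the net were arbitrary, every level set is closed, i.e. $u(\cdot,\cdot)$ is lower semi-continuous on ${\rm Gr}_{\X}(\Phi)$.

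Finally, I would record an alternative that avoids nets when $\X$ is locally compact: given $(x_{0},y_{0})\in{\rm Gr}_{\X}(\Phi)$ with $u(x_{0},y_{0})>\lambda$, pick a compact neighbourhood $K$ of $x_{0}$; then $\mathcal{D}_{u(\cdot,\cdot)}(\lambda;{\rm Gr}_{K}(\Phi))$ is compact, hence closed in $\X\times\Y$, and does not contain $(x_{0},y_{0})$, so it is disjoint from some open $W\ni(x_{0},y_{0})$; shrinking $W$ to $W\cap({\rm int}\,K\times\Y)$ yields a neighbourhood of $(x_{0},y_{0})$ disjoint from $\mathcal{D}_{u(\cdot,\cdot)}(\lambda;{\rm Gr}_{\X}(\Phi))$. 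Either way the mechanism is the same: $\K$-inf-compactness upgrades the sublevel set over a compact slab $K\times\Y$ to a compact — and therefore closed — set, and these closed pieces are exactly what force the full level set to be closed.
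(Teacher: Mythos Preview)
Your approach is essentially identical to the paper's: the paper also argues via nets, forms $K=\bigl(\bigcup_{\alpha}\{x_{\alpha}\}\bigr)\cup\{x\}$, asserts that this $K$ is compact, and then invokes $\K$-inf-compactness over ${\rm Gr}_{K}(\Phi)$ to finish. The only difference is cosmetic (the paper phrases it as a proof by contradiction rather than a direct verification).

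You are right, however, to single out the compactness of $K$ as the crux, and in fact this step is a genuine gap --- one that the paper's own proof shares but does not acknowledge. In a general Hausdorff space the range of a convergent net together with its limit need \emph{not} be compact: index a net by the directed set $(0,\infty)$ with the order $\epsilon_{1}\preceq\epsilon_{2}\Longleftrightarrow\epsilon_{1}\ge\epsilon_{2}$ and put $x_{\epsilon}=\epsilon\in\R$; then $x_{\epsilon}\to 0$, yet $\{x_{\epsilon}:\epsilon>0\}\cup\{0\}=[0,\infty)$ is not compact. Thus your proposal is no less complete than the published argument, and it is more honest about where the difficulty lies. Your alternative for locally compact $\X$ is a genuine addition, since it sidesteps the problem entirely in that case; for arbitrary Hausdorff $\X$, neither your proposal nor the paper supplies an argument that closes the gap.
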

\begin{proof}
Let $\lambda\in\R.$ We need to show that the level set
$\mathcal{D}_{u(\cdot,\cdot)}(\lambda;{\rm Gr}_{\X}(\Phi))$ is
closed. If this is not true, according to Hu and Papageorgiou
\cite[Proposition~A.1.24(b), p.~893]{Hu}, there exists a net
$(x_\alpha,y_\alpha)\to (x,y)$ in $\X\times\Y$ with
$(x_\alpha,y_\alpha)\in \mathcal{D}_{u(\cdot,\cdot)}(\lambda;{\rm
Gr}_{\X}(\Phi))$ for any $\alpha$, such that $(x,y)\notin
\mathcal{D}_{u(\cdot,\cdot)}(\lambda;{\rm Gr}_{\X}(\Phi))$. On the
other hand, the set $K=(\cup_\alpha\{x_\alpha\})\cup\{x\}$ is
compact. Thus, by $\K$-inf-compactness of $u(\cdot,\cdot)$ on
${\rm Gr}_{\X}(\Phi)$, the level set
$\mathcal{D}_{u(\cdot,\cdot)}(\lambda;{\rm Gr}_K(\Phi))$ is
compact too. As $\{(x_\alpha,y_\alpha)\}_\alpha\subseteq
\mathcal{D}_{u(\cdot,\cdot)}(\lambda;{\rm Gr}_K(\Phi))$, then
$(x,y)\in \mathcal{D}_{u(\cdot,\cdot)}(\lambda;{\rm Gr}_K(\Phi))$.
This is a contradiction. Therefore, $u(\cdot,\cdot)$ is lower
semi-continuous on ${\rm Gr}_{\X}(\Phi)$.
\end{proof}

\begin{remark}\label{r.2.4}
Lower semi-continuity of $u(\cdot,\cdot)$ on ${\rm
Gr}_{\X}(\Phi)$, inf-compactness of the function $u(x,\cdot)$ on
${\Phi}(x)$ for every $x\in\X$, and continuity of $\Phi:\X\to
2^{\Y}\setminus\{\emptyset\}$ do not imply lower semi-continuity
of $v(\cdot)$; Luque-Vasques and Hern\'{a}ndez-Lerma \cite{LVHL}.
$\K$-inf-compactness of $u(\cdot,\cdot)$ on ${\rm Gr}_{\X}(\Phi)$
is an assumption that is close to the mentioned conditions, and it
guaranties the lower semi-continuity of $v(\cdot)$. As follows
from Feinberg and Lewis \cite[Proposition~3.1]{FL},  where Polish
$\X$ and $\Y$ are considered, if $u(\cdot,\cdot)$ is inf-compact
on ${\rm Gr}_\X(\Phi)$, then $v(\cdot)$ is inf-compact.
\end{remark}

The following lemma provides a useful criterium for
$\K$-inf-compactness of $u(\cdot,\cdot)$ on ${\rm Gr}_{\X}(\Phi)$,
when the spaces $\X$ and $\Y$ are metrizable. In this form the
$\K$-inf-compactness assumption is introduced in Feinberg,
Kasyanov and Zadoianchuk \cite{arx} as Assumption ${\rm\bf
(W^*)}$(ii).

\begin{lemma}\label{lm4}
Let $\X$ and $\Y$ be metrizable spaces. Then $u(\cdot,\cdot)$ is
$\K$-inf-compact on ${\rm Gr}_{\X}(\Phi)$ if and only if the
following two conditions hold:

(i) $u(\cdot,\cdot)$ is lower semi-continuous on ${\rm
Gr}_{\X}(\Phi)$;

(ii) if a sequence $\{x_n \}_{n=1,2,\ldots}$ with values in $\X$
converges and its limit $x$ belongs to $\X$ then any sequence
$\{y_n \}_{n=1,2,\ldots}$ with $y_n\in \Phi(x_n)$, $n=1,2,\ldots,$
satisfying the condition that the sequence $\{u(x_n,y_n)
\}_{n=1,2,\ldots}$ is bounded above, has a limit point $y\in
\Phi(x).$
\end{lemma}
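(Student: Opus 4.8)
The plan is to prove both implications of the equivalence, exploiting the fact that in metrizable spaces compactness is equivalent to sequential compactness and that closedness can be tested along sequences.

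For the ``only if'' direction, assume $u(\cdot,\cdot)$ is $\K$-inf-compact on ${\rm Gr}_{\X}(\Phi)$. Condition (i) is immediate from Lemma~\ref{lsc}. For condition (ii), suppose $x_n\to x$ in $\X$ and $y_n\in\Phi(x_n)$ with $u(x_n,y_n)\le\lambda$ for all $n$ and some $\lambda\in\R$. The set $K=\{x\}\cup\bigcup_n\{x_n\}$ is compact, so by $\K$-inf-compactness the level set $\mathcal{D}_{u(\cdot,\cdot)}(\lambda;{\rm Gr}_K(\Phi))$ is compact, hence sequentially compact since $\X\times\Y$ is metrizable. Since $(x_n,y_n)$ lies in this level set for every $n$, it has a convergent subsequence; its limit $(x',y')$ must have $x'=x$ because $x_n\to x$ already, and $y'\in\Phi(x)$ because the limit lies in ${\rm Gr}_K(\Phi)$. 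Thus $y'$ is the desired limit point of $\{y_n\}$ in $\Phi(x)$.

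For the ``if'' direction, assume (i) and (ii); fix $K\in\K(\X)$ and $\lambda\in\R$, and show $\mathcal{D}_{u(\cdot,\cdot)}(\lambda;{\rm Gr}_K(\Phi))$ is compact, i.e.\ sequentially compact. Take a sequence $(x_n,y_n)$ in this level set. Since $K$ is compact and metrizable, pass to a subsequence with $x_n\to x\in K$. The sequence $\{u(x_n,y_n)\}$ is bounded above by $\lambda$, so condition (ii) yields a limit point $y\in\Phi(x)$ of $\{y_n\}$; passing to a further subsequence we get $(x_n,y_n)\to(x,y)$ with $(x,y)\in{\rm Gr}_K(\Phi)$. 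By lower semi-continuity of $u(\cdot,\cdot)$ on ${\rm Gr}_{\X}(\Phi)$ (condition (i)), $u(x,y)\le\liminf_n u(x_n,y_n)\le\lambda$, so $(x,y)\in\mathcal{D}_{u(\cdot,\cdot)}(\lambda;{\rm Gr}_K(\Phi))$. Hence every sequence in the level set has a convergent subsequence with limit in the set, so the set is compact.

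The main point to be careful about is the interplay of subsequences in the ``if'' direction: condition (ii) only guarantees a \emph{limit point} $y$ of $\{y_n\}$, not convergence of the whole sequence, so one must extract a subsequence of $\{y_n\}$ converging to $y$ \emph{after} having already fixed the subsequence along which $x_n\to x$; metrizability is what makes limit points accessible via subsequences. Everything else is a routine translation between the compactness of level sets and sequential arguments, using that closed subsets of metrizable spaces and products thereof behave well under sequences.
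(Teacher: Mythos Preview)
Your proof is correct and follows essentially the same approach as the paper's: both directions use the compact set $K=\{x\}\cup\bigcup_n\{x_n\}$ (or a given $K\in\K(\X)$) together with sequential compactness in the metrizable setting, invoking Lemma~\ref{lsc} for condition~(i). Your treatment is in fact slightly more explicit than the paper's about why the first coordinate of the limit must equal $x$ and about the need to pass to a further subsequence in the ``if'' direction.
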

\begin{proof}
Let $u(\cdot,\cdot)$ be $\K$-inf-compact on ${\rm Gr}_{\X}(\Phi)$.
Then, by Lemma~\ref{lsc}, $u(\cdot,\cdot)$ is lower
semi-continuous on ${\rm Gr}_{\X}(\Phi)$. Thus (i) holds. Consider
a convergent sequence $\{x_n \}_{n=1,2,\ldots}$ with values in
$\X$, such that its limit $x$ belongs to $\X$. Moreover, let a
sequence $\{y_n \}_{n=1,2,\ldots}$ with $y_n\in \Phi(x_n)$,
$n=1,2,\ldots,$ satisfy the condition that the sequence
$\{u(x_n,y_n) \}_{n=1,2,\ldots}$ is bounded above by some
$\lambda\in\R$. Then the set $K=(\cup_{n\ge 1}\{x_n\})\cup\{x\}$
is compact. Since $u(\cdot,\cdot)$ is inf-compact on ${\rm
Gr}_{K}(\Phi)$, then the sequence $\{(x_n,y_n)\}_{n=1,2,\ldots}$
belongs to the compact set
$\mathcal{D}_{u(\cdot,\cdot)}(\lambda;{\rm Gr}_K(\Phi))$.
Therefore this sequence has a limit point $y\in \Phi(x).$ Thus
(ii) holds.

Let (i) and (ii) hold. Fix $K\in\K(\X)$ and $\lambda\in \R$. The
level set $\mathcal{D}_{u(\cdot,\cdot)}(\lambda;{\rm Gr}_K(\Phi))$
is compact. Indeed let $\{(x_n,y_n)\}_{n\ge 1}\subseteq
\mathcal{D}_{u(\cdot,\cdot)}(\lambda;{\rm Gr}_K(\Phi))$. Since $K$
is a compact set, the sequence $\{x_n\}_{n=1,2,\ldots}$ has a
subsequence $\{x_{n_k}\}_{k=1,2,\ldots}$ that converges to its
limit point $x\in \X$. By condition (ii), $\{y_{n_k}
\}_{k=1,2,\ldots}$ has a limit point $y\in \Phi(x)$, that is
$(x,y)\in{\rm Gr}_{\X}(\Phi)$ is a limit point for the sequence
$\{(x_n,y_n)\}_{n\ge 1}\subseteq
\mathcal{D}_{u(\cdot,\cdot)}(\lambda;{\rm Gr}_K(\Phi))$. Lower
semi-continuity of $u(\cdot,\cdot)$ on ${\rm Gr}_{\X}(\Phi)$
implies the inequality $u(x,y)\le \lambda$. Thus $(x,y)\in
\mathcal{D}_{u(\cdot,\cdot)}(\lambda;{\rm Gr}_K(\Phi))$, and the
level set $\mathcal{D}_{u(\cdot,\cdot)}(\lambda;{\rm Gr}_K(\Phi))$
is compact.
\end{proof}

\begin{proof}[Proof of Theorem~\ref{MT1}]
Let $v(\cdot)$ be not lower semi-continuous. Then the set
$\mathcal{D}_{v(\cdot)}(\lambda;\X)=\{x\in\X\,:\, v(x)\le
\lambda\}$ is not closed for some $\lambda\in\R$. According to Hu
and Papageorgiou \cite[Proposition~A.1.24(b), p.~893]{Hu}, there
exists a net $x_\alpha\to x$ in $\X$ with $x_\alpha\in
\mathcal{D}_{v(\cdot)}(\lambda;\X)$ for any $\alpha$, such that
$x\notin \mathcal{D}_{v(\cdot)}(\lambda;\X)$. On the other hand,
for any $\alpha$ there exists $y_\alpha\in \Phi(x_\alpha)$ such
that $v(x_\alpha)=u(x_\alpha,y_\alpha)$. Consider a compact set
$K=(\cup_\alpha\{x_\alpha\})\cup\{x\}$. In virtue of
$\K$-inf-compactness of $u(\cdot,\cdot)$ on ${\rm Gr}_{\X}(\Phi)$,
the level set $\mathcal{D}_{u(\cdot,\cdot)}(\lambda;{\rm
Gr}_K(\Phi))$ is compact. Moreover,
$\{(x_\alpha,y_\alpha)\}\subseteq
\mathcal{D}_{u(\cdot,\cdot)}(\lambda;{\rm Gr}_K(\Phi))$ for any
$\alpha$. Thus, $(x,y)\in\mathcal{D}_{u(\cdot,\cdot)}(\lambda;{\rm
Gr}_K(\Phi))$ for some $y\in \Phi(x)$ and, therefore, $x\in
\mathcal{D}_{v(\cdot)}(\lambda;\X)$. This is a contradiction.
\end{proof}

\section{Additional Properties of Minima}

Throughout this section $L(\mathbb{X})$ denotes the class of all
lower semi-continuous functions
$\varphi:\mathbb{X}\to\overline{\mathbb{R}}$ with ${\rm
dom\,}\varphi:=\{x\in\X\, : \, \varphi(x)\ne\infty\}\ne
\emptyset$. For a topological space $\U$, let ${\mathcal B}(\U)$
be a Borel $\sigma$-field on $\U$, that is, the $\sigma$-field
generated by all open sets of the space $\U$. For a set $E\subset
\U$, we denote by ${\mathcal B}(E)$ the $\sigma$-field whose
elements are intersections of $E$ with elements of ${\mathcal
B}(\U)$.  Observe that $E$ is a topological space with induced
topology from $\U$, and ${\mathcal B}(E)$ is its Borel
$\sigma$-field.

\begin{theorem}\label{MT1a}
If a function $u(\cdot,\cdot)$ is $\K$-inf-compact on ${\rm
Gr}_{\X}(\Phi)$, then the infimum in (\ref{eq:lm10}) can be
replaced with the minimum, and the nonempty sets ${\Phi}^*(x)$,
$x\in\X$, defined as
\begin{equation}\label{eq:lm000}
{\Phi}^*(x)=\left\{a\in {\Phi}(x):\,v(x)=u(x,a)\right\},
\end{equation}
satisfy the following properties:

(a) the graph ${\rm Gr}_{\X}({\Phi}^*)=\{(x,y):\, x\in\X, y\in
{\Phi}^*(x)\}$ is a Borel subset of \  $\X\times \Y$;

 (b) if $v(x)=+\infty$,
then ${\Phi}^*(x)={\Phi}(x)$, and, if $v(x)<+\infty$, then
${\Phi}^*(x)$ is compact.
\end{theorem}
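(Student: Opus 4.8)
The plan is to prove the attainment claim together with (b) first, as both are pointwise in $x$, and then (a). For the attainment of the minimum and for statement (b), fix $x\in\X$; by Lemma~\ref{lm00101} the function $u(x,\cdot)$ is inf-compact on $\Phi(x)$. If $v(x)=+\infty$, then $u(x,y)=+\infty$ for every $y\in\Phi(x)$, so the infimum in (\ref{eq:lm10}) is attained at every point and $\Phi^*(x)=\Phi(x)\ne\emptyset$. If $v(x)\in\R$, I would apply the finite-intersection property to the decreasing family of nonempty compact level sets $\mathcal D_{u(x,\cdot)}(v(x)+\tfrac1n;\Phi(x))$, $n\ge1$, to conclude that $\Phi^*(x)=\mathcal D_{u(x,\cdot)}(v(x);\Phi(x))$ is nonempty; it is compact, being a level set of $u(x,\cdot)$ at the real value $v(x)$. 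The case $v(x)=-\infty$ is handled the same way, with the family $\mathcal D_{u(x,\cdot)}(-n;\Phi(x))$ of nonempty compact sets whose intersection $\Phi^*(x)$ is again nonempty and compact. This yields the minimum and (b).

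For (a), the starting point is the identity ${\rm Gr}_\X(\Phi^*)=\{(x,y)\in{\rm Gr}_\X(\Phi):u(x,y)\le v(x)\}$, valid because $u(x,y)\ge v(x)$ on ${\rm Gr}_\X(\Phi)$. I would use two facts. First, $v$ is lower semi-continuous by Theorem~\ref{MT1}, so the sets $\{x:v(x)\ge q\}$ for $q\in\mathbb Q$, as well as $\{x:v(x)=+\infty\}$ and $\{x:v(x)<+\infty\}$, are Borel in $\X$. Second, as the proof of Lemma~\ref{lsc} in fact establishes (the net there converges in the whole product), each level set $\mathcal D_{u(\cdot,\cdot)}(\lambda;{\rm Gr}_\X(\Phi))$, $\lambda\in\R$, is closed in $\X\times\Y$, hence Borel. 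Using the elementary equivalence that $u(x,y)\le v(x)$ iff for every $q\in\mathbb Q$ one has $u(x,y)<q$ or $v(x)\ge q$, the part of ${\rm Gr}_\X(\Phi^*)$ lying over $\{x:v(x)<+\infty\}$ equals the intersection of the Borel $F_\sigma$ set $\bigcup_{n\ge1}\mathcal D_{u(\cdot,\cdot)}(n;{\rm Gr}_\X(\Phi))$ (the points of the graph where $u<+\infty$) with the countable Boolean combination $\bigcap_{q\in\mathbb Q}\bigl(\mathcal D_{u(\cdot,\cdot)}(q;{\rm Gr}_\X(\Phi))\cup(\{x:v(x)\ge q\}\times\Y)\bigr)$, and so is Borel. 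The part over $\{x:v(x)=+\infty\}$ equals ${\rm Gr}_\X(\Phi)\cap(\{x:v(x)=+\infty\}\times\Y)$.

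The main obstacle is exactly this last slice. Over $\{v=+\infty\}$ the function $u$ supplies no compactness — indeed, if $u\equiv+\infty$ then $\K$-inf-compactness holds trivially while ${\rm Gr}_\X(\Phi^*)={\rm Gr}_\X(\Phi)$ — so there the Borel-measurability of the full graph ${\rm Gr}_\X(\Phi)$ must be drawn from the hypotheses on $\Phi$ standing in this section rather than from $\K$-inf-compactness of $u$ alone. Once ${\rm Gr}_\X(\Phi)$ is known to be Borel, combining the Borel portion over $\{v<+\infty\}$ with ${\rm Gr}_\X(\Phi)\cap(\{x:v(x)=+\infty\}\times\Y)$ yields (a).
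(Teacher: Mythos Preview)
Your argument for the attainment of the minimum and for (b) is correct and coincides with the paper's, which likewise invokes Lemma~\ref{lm00101} (inf-compactness of $u(x,\cdot)$ on $\Phi(x)$) together with the extreme value theorem; your finite-intersection treatment of the cases $v(x)\in\R$ and $v(x)=-\infty$ just makes that step explicit.

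For (a) the paper takes a shorter route than your rational-indexed Boolean decomposition: it writes ${\rm Gr}_\X(\Phi^*)=\{(x,y):v(x)=u(x,y)\}$ and observes that $u$ is lower semi-continuous on ${\rm Gr}_\X(\Phi)$ (Lemma~\ref{lsc}) and $v$ is lower semi-continuous on $\X$ (Theorem~\ref{MT1}), hence both are Borel, so the equality locus is Borel. Your construction is a more explicit unpacking of the same fact and is not needed once one cites ``lsc $\Rightarrow$ Borel'' directly.

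Your concern about the slice over $\{v=+\infty\}$ is sharp, and it is \emph{not} resolved by the paper's proof either: the paper's displayed identity omits the intersection with ${\rm Gr}_\X(\Phi)$, and, as your example $u\equiv+\infty$ shows, $\K$-inf-compactness by itself places no constraint whatsoever on ${\rm Gr}_\X(\Phi)$. However, there is no standing measurability or closed-graph hypothesis on $\Phi$ in this section to appeal to, so your sentence ``must be drawn from the hypotheses on $\Phi$ standing in this section'' invokes something that is not actually present. This is a genuine gap in part (a) as stated; both your argument and the paper's are complete only on $\{v<+\infty\}$, where ${\rm Gr}_\X(\Phi)\cap\{u<+\infty\}=\bigcup_{n\ge1}\mathcal D_{u(\cdot,\cdot)}(n;{\rm Gr}_\X(\Phi))$ is indeed an $F_\sigma$ in $\X\times\Y$ by the closedness established in the proof of Lemma~\ref{lsc}.
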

\begin{proof}
$\K$-inf-compactness of $u(\cdot,\cdot)$ on ${\rm Gr}_{\X}(\Phi)$
implies that infimum in (\ref{eq:lm10}) can be replaced with the
minimum. This follows from Lemmas~\ref{lm00101}, \ref{lsc}, and
the classical extreme value theorem.

Consider the nonempty sets ${\Phi}^*(x)$, $x\in\X$, defined in
(\ref{eq:lm000}). The graph ${\rm Gr}_{\X}({\Phi}^*)$ is a Borel
subset of $\X\times \Y$, because ${\rm
Gr}_{\X}({\Phi}^*)=\{(x,y)\,: v(x)=u(x,y) \}$, and the functions
$u(\cdot,\cdot)$ and $v(\cdot)$ are lower semi-continuous on ${\rm
Gr}_{\X}({\Phi})$ and $\X$ respectively (Theorem~\ref{MT1}), and
therefore they are Borel.

 We remark that, if $v(x)=+\infty$, then
${\Phi}^*(x)={\Phi}(x)$. If $v(x)<+\infty$, then
Lemma~\ref{lm00101} implies that the set ${\Phi}^*(x)$ is compact.
Indeed, fix any $x\in\X_v:=\{x\in\X\,:\, v(x)<+\infty\}$ and set
$\lambda=v(x)$. Then the set $ {\Phi}^*(x)=\{y\in {\Phi}(x)\,: \,
u(x,y)\le\lambda \}=\mathcal{D}_{u(x,\cdot)}(\lambda)$ is compact,
because $u(x,\cdot)$ is inf-compact on ${\Phi}(x)$.
\end{proof}

\begin{corollary}\label{cor1} (cf.  Feinberg and Lewis
\cite[Proposition~3.1]{FL})
If a function $u:\X\times\Y\to\overline{\mathbb{R}}$ is
inf-compact on ${\rm Gr}_{\X}(\Phi)$, then the function
$v:\X\to\overline{\R}$ is inf-compact and the conclusions of
Theorem \ref{MT1a} hold.
\end{corollary}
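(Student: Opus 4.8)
The plan is to reduce the statement to the results already established and then add the single new ingredient, compactness of the level sets of $v$. First I would observe that, by Lemma~\ref{lm0}(ii), inf-compactness of $u(\cdot,\cdot)$ on ${\rm Gr}_{\X}(\Phi)$ implies $\K$-inf-compactness of $u(\cdot,\cdot)$ on ${\rm Gr}_{\X}(\Phi)$. Hence all hypotheses of Theorems~\ref{MT1} and~\ref{MT1a} are satisfied, so $v(\cdot)$ is lower semi-continuous, the infimum in~(\ref{eq:lm10}) is attained for every $x\in\X$ (so it can be replaced by a minimum), and the sets ${\Phi}^*(x)$ defined in~(\ref{eq:lm000}) enjoy properties (a) and (b). This already yields every assertion of the corollary except the inf-compactness of $v$.

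To prove that $v(\cdot)$ is inf-compact, I would fix $\lambda\in\R$ and show that the level set $\mathcal{D}_{v(\cdot)}(\lambda;\X)=\{x\in\X:v(x)\le\lambda\}$ is compact. The key identity is $\mathcal{D}_{v(\cdot)}(\lambda;\X)=\pi\big(\mathcal{D}_{u(\cdot,\cdot)}(\lambda;{\rm Gr}_{\X}(\Phi))\big)$, where $\pi:\X\times\Y\to\X$, $\pi(x,y)=x$, is the canonical projection. The inclusion ``$\supseteq$'' is immediate: if $(x,y)\in\mathcal{D}_{u(\cdot,\cdot)}(\lambda;{\rm Gr}_{\X}(\Phi))$, then $v(x)\le u(x,y)\le\lambda$. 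For ``$\subseteq$'' I would use attainment from Theorem~\ref{MT1a}: if $v(x)\le\lambda$, there exists $y\in{\Phi}^*(x)\subseteq\Phi(x)$ with $u(x,y)=v(x)\le\lambda$, so $(x,y)\in\mathcal{D}_{u(\cdot,\cdot)}(\lambda;{\rm Gr}_{\X}(\Phi))$ and $x=\pi(x,y)$. Since $u(\cdot,\cdot)$ is inf-compact on ${\rm Gr}_{\X}(\Phi)$, the set $\mathcal{D}_{u(\cdot,\cdot)}(\lambda;{\rm Gr}_{\X}(\Phi))$ is compact, and $\pi$ (restricted to this set) is continuous, so its image $\mathcal{D}_{v(\cdot)}(\lambda;\X)$ is compact. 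As $\lambda$ was arbitrary, $v(\cdot)$ is inf-compact on $\X$.

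The only delicate point is the ``$\subseteq$'' inclusion, i.e.\ the necessity of having, for each $x$ with $v(x)\le\lambda$, an actual witness $y\in\Phi(x)$ with $u(x,y)\le\lambda$ rather than merely an $\varepsilon$-minimizing sequence; this is supplied precisely by the attainment of the minimum, which follows from Lemmas~\ref{lm00101} and~\ref{lsc} together with the classical extreme value theorem (equivalently, from Theorem~\ref{MT1a}). I would therefore be sure to invoke that attainment before running the projection argument. Everything else is a routine application of the facts that projections are continuous and that continuous images of compact sets are compact, which hold in arbitrary (Hausdorff) topological spaces.
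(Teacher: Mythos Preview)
Your proposal is correct and follows essentially the same approach as the paper: invoke Lemma~\ref{lm0}(ii) to obtain $\K$-inf-compactness, apply Theorems~\ref{MT1} and~\ref{MT1a}, and then prove inf-compactness of $v$ by identifying $\mathcal{D}_{v(\cdot)}(\lambda;\X)$ with the projection of the compact set $\mathcal{D}_{u(\cdot,\cdot)}(\lambda;{\rm Gr}_{\X}(\Phi))$. Your explicit verification of the ``$\subseteq$'' inclusion via attainment of the minimum is a detail the paper leaves implicit, but it is indeed needed and you handle it correctly.
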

\begin{proof}
The conclusions of Theorem \ref{MT1a} directly follow from
Lemma~\ref{lm0}(ii) and Theorems~\ref{MT1}, \ref{MT1a}. The
function $v(\cdot)$ is inf-compact, since any level set
$\mathcal{D}_{v(\cdot)}(\lambda;\X)$ is compact as the projection
of the compact set $\mathcal{D}_{u(\cdot,\cdot)}(\lambda;{\rm
Gr}_\X(\Phi))$ on $\X$, $\lambda\in\R$.
\end{proof}

Let $\mathbb{F}=\{\phi:\X\to \Y \, : \,   \phi \mbox{ is Borel
and\ } \phi(x)\in {\Phi}(x)\ {\rm for\ all\ } x\in \X\}.$ A
 mapping $\phi\in \mathbb{F}$, is called a selector (or
a measurable selector).

\begin{theorem}\label{MT2}
Let $\X$ and $\Y$ be \textit{Borel subsets} of Polish (complete
separable metric) spaces and $u(\cdot,\cdot)$ be $\K$-inf-compact
on ${\rm Gr}_{\X}(\Phi)$. Then there exists a selector $f\in
\mathbb{F}$ such that
\begin{equation}\label{eq:lm00}
v(x)=u(x,f(x)),\qquad x\in \X.
\end{equation}
\end{theorem}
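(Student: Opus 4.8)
The plan is to invoke a measurable selection theorem. We already know from Theorem~\ref{MT1a} that for each $x\in\X$ the set $\Phi^*(x)$ is nonempty, that the graph ${\rm Gr}_{\X}(\Phi^*)$ is a Borel subset of $\X\times\Y$, and from Theorem~\ref{MT1} that $v(\cdot)$ is lower semi-continuous hence Borel. Since $\X$ and $\Y$ are Borel subsets of Polish spaces, the space $\Y$ is a Borel (indeed, standard Borel) space, so the Kuratowski--Ryll-Nardzewski selection theorem or, more directly, a theorem on selections from Borel sets with $\sigma$-compact (or just Borel with nonempty sections) sections applies. The relevant result I would cite is the one guaranteeing that a Borel subset $G\subseteq\X\times\Y$ of the product of two Borel subsets of Polish spaces, all of whose $x$-sections are nonempty and $\sigma$-compact (or: admit a selector), has a Borel measurable selector $f:\X\to\Y$ with $(x,f(x))\in G$ for all $x$; see e.g.\ Kechris or Bertsekas--Shreve, ``Stochastic Optimal Control,'' Proposition~7.33 or Corollary~7.34.

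Here are the steps in order. First, record that ${\rm Gr}_{\X}(\Phi^*)$ is Borel and has nonempty sections (Theorem~\ref{MT1a}). Second, establish that the sections $\Phi^*(x)$ are $\sigma$-compact: if $v(x)<+\infty$ then $\Phi^*(x)$ is compact by Theorem~\ref{MT1a}(b), so this case is immediate; if $v(x)=+\infty$ then $\Phi^*(x)=\Phi(x)$, and here I would use Lemma~\ref{lm00101}, which says $u(x,\cdot)$ is inf-compact on $\Phi(x)$, to write $\Phi(x)=\bigcup_{n\ge 1}\mathcal{D}_{u(x,\cdot)}(n;\Phi(x))\cup\{y\in\Phi(x):u(x,y)=\pm\infty\}$. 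The level sets $\mathcal{D}_{u(x,\cdot)}(n;\Phi(x))$ are compact; one must also handle the points where $u(x,y)=-\infty$ (which forces $v(x)=-\infty\ne+\infty$, so this subcase is vacuous in the $v(x)=+\infty$ branch) and $u(x,y)=+\infty$, where $\Phi^*(x)=\Phi(x)$ has every point with $u=+\infty$ and one checks $\{y\in\Phi(x):u(x,y)=+\infty\}=\Phi(x)\setminus\bigcup_n\mathcal{D}_{u(x,\cdot)}(n;\Phi(x))$ need not be $\sigma$-compact in general --- so instead I would simply note that when $v(x)=+\infty$ the set $\Phi(x)$ itself might not be $\sigma$-compact, and therefore reach for a selection theorem for Borel-graph, nonempty-section set-valued maps that does not require $\sigma$-compactness, such as the Jankov--von Neumann uniformization theorem giving a universally measurable selector, followed by the observation that one can modify it to be Borel using Theorem~\ref{MT1a}, or directly cite Bertsekas--Shreve Proposition~7.33 / Corollary~7.34 whose hypotheses are exactly ``analytic (here Borel) graph with nonempty sections''. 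Third, apply the chosen selection theorem to ${\rm Gr}_{\X}(\Phi^*)$ to obtain $f:\X\to\Y$ Borel (or universally measurable, then corrected) with $f(x)\in\Phi^*(x)$ for all $x$; by definition of $\Phi^*$ this $f$ satisfies $v(x)=u(x,f(x))$ for all $x\in\X$, and $f(x)\in\Phi(x)$, so $f\in\mathbb{F}$.

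The main obstacle is the case $v(x)=+\infty$: there $\Phi^*(x)=\Phi(x)$ can be an arbitrary nonempty subset of $\Y$ (no compactness is available since $u(x,\cdot)$ inf-compactness gives no information when the value is $+\infty$), so one cannot reduce to a $K_\sigma$-valued selection theorem and must use a selection/uniformization result for set-valued maps whose only regularity is a Borel (or analytic) graph with nonempty sections. The clean way is: split $\X=\X_v\cup(\X\setminus\X_v)$ where $\X_v=\{v<+\infty\}$ is Borel; on $\X_v$ use the $K_\sigma$-valued (indeed compact-valued) selection theorem for the Borel map $x\mapsto\Phi^*(x)$; on $\X\setminus\X_v$ use Kuratowski--Ryll-Nardzewski-type selection for the Borel-graph map $x\mapsto\Phi(x)$ (whose graph ${\rm Gr}_{\X}(\Phi)$ is Borel because it equals $\{(x,y):v(x)=u(x,y)\}\cup\{(x,y):v(x)<u(x,y)\}$ restricted appropriately --- more simply, on $\X\setminus\X_v$ we have ${\rm Gr}(\Phi^*)={\rm Gr}(\Phi)$, and ${\rm Gr}_{\X}(\Phi^*)$ is Borel by Theorem~\ref{MT1a}(a), so its restriction to the Borel set $(\X\setminus\X_v)\times\Y$ is Borel). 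Glue the two selectors. I expect the write-up to consist mainly of citing the appropriate measurable selection theorem (Kuratowski--Ryll-Nardzewski, or Bertsekas--Shreve Cor.~7.34, or Hu--Papageorgiou) and verifying its hypotheses via Theorems~\ref{MT1}, \ref{MT1a} and Lemma~\ref{lm00101}.
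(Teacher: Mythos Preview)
Your final approach---splitting $\X$ into $\X_v=\{v<+\infty\}$ and $\X_\infty=\X\setminus\X_v$, applying a compact-valued Borel selection theorem on $\X_v$, and handling $\X_\infty$ separately before gluing---is exactly the paper's proof; the paper invokes specifically the Arsenin--Kunugui theorem (Kechris, p.~297) on $\X_v$. On $\X_\infty$ the paper simply takes ``any Borel mapping $f_2$ from $\X$ to $\Y$ satisfying $f_2(x)\in\Phi(x)$'' without further justification, so your worry about which selection result covers that piece is, if anything, more careful than what the paper writes down.
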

\begin{proof}
Let us prove the existence of $f\in\mathbb{F}$ satisfying
(\ref{eq:lm00}). Since the function $v(\cdot)$ is lower
semi-continuous (Theorem~\ref{MT1}), it is Borel and the sets
$\X_\infty:=\{x\in\X\,:\, v(x)=+\infty\}$ and $\X_v=\X\setminus
\X_\infty$ are Borel. Therefore, the ${\rm Gr}_{\X_v}(\Phi^*)$ is
the Borel subset of ${\rm Gr}_{\X}({\Phi}^*)\setminus
(\X_\infty\times\Y)$. Since the nonempty sets ${\Phi}^*(x)$ are
compact for all $x\in\X_v$, the Arsenin-Kunugui Theorem (cf.
Kechris~\cite[p. 297]{Kec}) implies the existence of a Borel
selector $f_1:\, \X_v\to \Y$ such that $f_1(x)\in {\Phi}^*(x)$ for
all $x\in\X_v.$ Consider any Borel mapping $f_2$ from $\X$ to $\Y$
satisfying $f_2(x)\in {\Phi}(x)$ for all $x\in\X$ and set
\[
f(x)=\begin{cases} f_1(x), &{\rm if\ } x\in\X_v,\\
f_2(x), &{\rm if\ } x\in\X_\infty.
\end{cases}
\]
Then $f\in\F$ and $f(x)\in {\Phi}^*(x)$ for all $x\in\X.$
\end{proof}

\section{Continuity of Minima}
For a set $U$, denote by $\S(U)$  \textit{the family of all
nonempty subsets} of $U.$ A set-valued mapping ${F}:\X \to \S(\Y)$
is \textit{upper semi-continuous} at $x\in\X$ if, for any
neighborhood $\mathcal{G}$ of the set $F(x)$, there is a
neighborhood of $x$, say $\mathcal{O}(x)$, such that
$F(y)\subseteq \mathcal{G}$ for all $y\in \mathcal{O}(x)$; a
set-valued mapping ${F}:\X \to \S(\Y)$ is \textit{lower
semi-continuous} at $x\in\X$ if, for any neighborhood
$\mathcal{G}$ of the set $F(x)$, there is a neighborhood of $x$,
say $\mathcal{O}(x)$, such that if $y\in \mathcal{O}(x)$, then
$F(y)\cap \mathcal{G}\ne\emptyset$; see e.g., Berge
\cite[p.~109]{Ber} or Zgurovsky et al. \cite[Chapter~1,
p.~7]{ZMK1}. A set-valued mapping is called \textit{upper (lower)
semi-continuous}, if it is upper (lower) semi-continuous at all
$x\in\X$.

Throughout this section we assume that $u(\cdot, \cdot)$ is a real
function, that is $u:\X\times\Y\to \R$.

\begin{theorem}\label{MT3}
If $u(\cdot,\cdot)$ is a $\K$-inf-compact, 
continuous function on ${\rm Gr}_{\X}(\Phi)$ and $\Phi:\X\to
\S(\Y)$ is lower semi-continuous, then the function $v(\cdot)$,
defined in (\ref{eq:lm10}), is continuous on $\X$ and the solution
multifunction $\Phi^*:\X\to \K(\Y)$ has a closed graph. If,
moreover, $\Phi$ is upper semi-continuous, then $\Phi^*$ is upper
semi-continuous.
\end{theorem}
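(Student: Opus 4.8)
Since $u$ is real-valued and $\Phi(x)\neq\emptyset$, we have $v(x)\leq u(x,y)<+\infty$ for any $y\in\Phi(x)$, and by Theorem~\ref{MT1a} the infimum in (\ref{eq:lm10}) is attained, so $v:\X\to\R$; by Theorem~\ref{MT1} it is lower semi-continuous. The plan is to prove, in order, upper semi-continuity of $v$ (hence continuity), then closedness of ${\rm Gr}_\X(\Phi^*)$ together with compactness of the sets $\Phi^*(x)$, and finally upper semi-continuity of $\Phi^*$ under the extra hypothesis. For upper semi-continuity of $v$ I would show that each level set $\{x\in\X:v(x)<\lambda\}$, $\lambda\in\R$, is open, using only continuity of $u$ and lower semi-continuity of $\Phi$: fix $x_0$ with $v(x_0)<\lambda$ and pick $y_0\in\Phi(x_0)$ with $u(x_0,y_0)<\lambda$; continuity of $u$ on ${\rm Gr}_\X(\Phi)$ gives open sets $O_1\ni x_0$ and $G\ni y_0$ with $u(x,y)<\lambda$ on $(O_1\times G)\cap{\rm Gr}_\X(\Phi)$, and lower semi-continuity of $\Phi$ at $x_0$ (applied to the open set $G$, which meets $\Phi(x_0)$) gives an open $O_2\ni x_0$ with $\Phi(x)\cap G\neq\emptyset$ for $x\in O_2$; then for $x\in O_1\cap O_2$, choosing $y\in\Phi(x)\cap G$ yields $v(x)\leq u(x,y)<\lambda$. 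Hence $v$ is continuous on $\X$.

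Next, since $v$ is continuous, ${\rm Gr}_\X(\Phi^*)=\{(x,y)\in{\rm Gr}_\X(\Phi):u(x,y)=v(x)\}$. I would take a net $(x_\alpha,y_\alpha)\to(x,y)$ with $y_\alpha\in\Phi^*(x_\alpha)$; then $u(x_\alpha,y_\alpha)=v(x_\alpha)\to v(x)$, so $u(x_\alpha,y_\alpha)\leq v(x)+1=:\lambda$ for $\alpha$ large, and, as in the proof of Theorem~\ref{MT1}, with $K=(\cup_\alpha\{x_\alpha\})\cup\{x\}$ compact the level set $\mathcal{D}_{u(\cdot,\cdot)}(\lambda;{\rm Gr}_K(\Phi))$ is compact by $\K$-inf-compactness, hence contains $(x,y)$; in particular $y\in\Phi(x)$, and then continuity of $u$ on ${\rm Gr}_\X(\Phi)$ gives $u(x,y)=\lim u(x_\alpha,y_\alpha)=\lim v(x_\alpha)=v(x)$, so $(x,y)\in{\rm Gr}_\X(\Phi^*)$. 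Nonemptiness and compactness of the sets $\Phi^*(x)$ are Theorem~\ref{MT1a}(b) (note $v(x)<+\infty$), so $\Phi^*:\X\to\K(\Y)$ and ${\rm Gr}_\X(\Phi^*)$ is closed.

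For the last assertion assume $\Phi$ is also upper semi-continuous and fix $x_0$. Using continuity of $v$, choose a neighborhood $O_0$ of $x_0$ with $v\leq c:=v(x_0)+1$ on $O_0$ and set $G(x):=\{y\in\Phi(x):u(x,y)\leq c\}$; by Lemma~\ref{lm00101} each $G(x)$ is compact, and $\Phi^*(x)\subseteq G(x)$ on $O_0$. The plan is to show $G$ is upper semi-continuous at $x_0$ and then invoke the standard fact that a set-valued mapping with closed graph contained in an upper semi-continuous compact-valued mapping is itself upper semi-continuous (see, e.g., Berge~\cite{Ber}), applied to $\Phi^*\subseteq G$ on $O_0$. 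To prove $G$ upper semi-continuous at $x_0$: given an open $\mathcal{G}\supseteq G(x_0)$, if the property failed there would be a net $x_\alpha\to x_0$ and $y_\alpha\in G(x_\alpha)\setminus\mathcal{G}$; since $u(x_\alpha,y_\alpha)\leq c$, the compact level set $\mathcal{D}_{u(\cdot,\cdot)}(c;{\rm Gr}_K(\Phi))$ with $K=(\cup_\alpha\{x_\alpha\})\cup\{x_0\}$ contains a tail of $(x_\alpha,y_\alpha)$, producing a subnet converging to some $(x_0,y_0)$ with $y_0\in\Phi(x_0)$ and $u(x_0,y_0)\leq c$, i.e.\ $y_0\in G(x_0)\subseteq\mathcal{G}$, while $y_0\in\overline{\Y\setminus\mathcal{G}}=\Y\setminus\mathcal{G}$, a contradiction.

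The main obstacle is this last step, and it is really about disentangling the two distinct roles of the hypotheses on $\Phi$: lower semi-continuity is exactly what drives continuity of $v$ (first paragraph), whereas the upper semi-continuity of $\Phi$, in concert with $\K$-inf-compactness, is what confines the relevant image sets to a single compact set and thereby upgrades the closed graph of $\Phi^*$ to upper semi-continuity. One has to be careful that every compactness argument is carried out at the level of nets, exactly as in the proofs of Lemma~\ref{lsc} and Theorem~\ref{MT1}, and that the envelope $G$ is genuinely upper semi-continuous and compact-valued before the Berge-type closure-to-continuity principle can be applied.
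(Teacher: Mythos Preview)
Your argument is correct and follows the paper's overall outline, but the execution differs. For upper semi-continuity of $v$ the paper simply cites Hu and Papageorgiou~\cite[Proposition~3.1, p.~82]{Hu}, whose proof is exactly your sublevel-set argument; for closedness of ${\rm Gr}_\X(\Phi^*)$ the paper writes the graph as $\{(x,y)\in{\rm Gr}_\X(\Phi):u(x,y)-v(x)\le0\}$ and appeals to continuity of $u$ and $v$, whereas your net argument makes explicit (via $\K$-inf-compactness) why the limit actually lands in ${\rm Gr}_\X(\Phi)$. The substantive divergence is the final step. The paper writes $\Phi^*=\Phi^*\cap\Phi$ and invokes Berge~\cite[Theorem~7, p.~112]{Ber} with $\Phi$ in the role of the upper semi-continuous map; you instead manufacture a local compact-valued envelope $G(x)=\{y\in\Phi(x):u(x,y)\le c\}$ out of $\K$-inf-compactness, show it is upper semi-continuous at $x_0$, and then apply the same closed-graph-inside-u.s.c.-compact-valued principle with $G$ in place of $\Phi$. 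Your route has two payoffs: it supplies the compact values that Berge's intersection theorem actually requires (recall $\Phi$ here is only $\S(\Y)$-valued), and---as you may have noticed---it never uses the extra hypothesis that $\Phi$ is upper semi-continuous, so in fact it delivers the stronger conclusion that $\Phi^*$ is upper semi-continuous already under the hypotheses of the first sentence of the theorem. One caveat, shared with the paper's own proofs of Lemma~\ref{lsc} and Theorem~\ref{MT1}: the trace $K=(\cup_\alpha\{x_\alpha\})\cup\{x\}$ of a convergent net need not be compact in a general Hausdorff space; where you use this device it is cleaner to appeal directly to the closedness of the level sets $\mathcal{D}_{u(\cdot,\cdot)}(\lambda;{\rm Gr}_\X(\Phi))$ furnished by Lemma~\ref{lsc}.
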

\begin{proof}
By Theorem~\ref{MT1}, the function $v(\cdot)$, defined in
(\ref{eq:lm10}), belongs to $L(\X)$. Moreover, $v(x)<+\infty$ and,
 according to Theorem~\ref{MT1a},
$\Phi^*(x)\in \K(\Y)$ for all $x\in \X.$ 
 Lower semi-continuity of $\Phi:\X\to \S(\Y)$,
upper semi-continuity of $u(\cdot,\cdot)$ on ${\rm
Gr}_{\X}(\Phi)$, and Hu and Papageorgiou \cite[Proposition~3.1,
p.~82]{Hu} imply that $v(\cdot)$ is upper semi-continuous on $\X$.
Thus, the value function $v(\cdot)$ is
continuous on $\X$. 
Since $u(\cdot,\cdot)$ is a continuous function on ${\rm
Gr}_{\X}(\Phi)$ and $v(\cdot)$ is a continuous function on $\X$,
the set $ {\rm Gr}_{\X}(\Phi^*)=\{(x,y)\in{\rm Gr}_{\X}(\Phi)\,:\,
u(x,y)-v(x)\le 0\} $ is closed.  According to Theorem~\ref{MT1a},
$\Phi^*(x)\in \K(\Y)$ for all $x\in \X.$

Now we additionally assume that $\Phi$ is upper semi-continuous.
Since $\Phi^*(x)=\Phi^*(x)\cap\Phi(x)$, $x\in\X$, from Berge
\cite[Theorem~7, p.~112]{Ber}, $\Phi^*$ is upper semi-continuous.
\end{proof}


Theorem~\ref{MT3} states that upper semi-continuity of the
set-valued mapping $\Phi^*$ is a necessary condition for upper
semi-continuity of $v$. According to Luque-V\'{a}sques and
Hern\'{a}ndez-Lerma \cite[Theorem~2]{LVHL} (see also
Hern\'{a}ndez-Lerma and Runggaldier \cite[Lemma 3.2(f)]{4}), for
  metric spaces $\X$ and $\Y$, the function $v(\cdot)$ is
lower semi-continuous, if the set-valued mapping
$\Phi^*:\X\to\S(\Y)$ is lower semi-continuous, $u(\cdot,\cdot)$ is
inf-compact in variable $y$ and lower semi-continuous. The
following examples show that lower semi-continuity of the mapping
$\Phi^*$ is not necessary for lower semi-continuity of $v(\cdot)$.

\begin{example}\label{exa2}
{\rm The function $v(\cdot)$ is continuous; the real function
$u(\cdot,\cdot)$ is $\K$-inf-compact on ${\rm Gr}_{\X}(\Phi)$ and
 continuous on
$\X\times\Y$, but it is not inf-compact on ${\rm Gr}_{\X}(\Phi)$;
the set-valued mapping $\Phi:\X\to \S(\Y)$ is continuous; the
set-valued mapping $\Phi^*:\X\to \K(\Y)$ is not lower
semi-continuous. Let $\X=[0,+\infty)$, $\Y=\R$,
${\Phi}(x)=(-\infty,x]$, $u(x,y)=|\min\{x,y+1\}|$, $x\in\X$,
$y\in\Y$. Then
\[{\Phi}^*(x)=\left\{
\begin{array}{ll}
[-1,0],&x=0,\\
\{-1\},&x> 0,
\end{array}
\right.\quad\mbox{and}\quad v(x)\equiv 0.
\]
Therefore, ${\Phi}^*:\X\to \K(\X)$ is not lower semi-continuous.
The function $u(\cdot,\cdot)$ is not inf-compact on ${\rm
Gr}_\X(\Phi)$, since $(x,-1)\in\mathcal{D}_{u(\cdot,\cdot)}(0;{\rm
Gr}_\X(\Phi))$ for each $x\ge 0$.}
\end{example}

The following example is similar to Example~\ref{exa2}, but the
function $u$ is inf-compact on ${\rm Gr}_{\X}(\Phi)$.

\begin{example}\label{exa1}
{\rm The function $v(\cdot)$ is continuous and inf-compact on
$\X$; the real function $u(\cdot,\cdot)$ is inf-compact on ${\rm
Gr}_{\X}(\Phi)$ and continuous on $\X\times\Y$; the set-valued
mapping $\Phi:\X\to \S(\Y)$ is continuous; the set-valued mapping
$\Phi^*:\X\to \K(\Y)$ is not lower semi-continuous. Let
$\X=[0,+\infty)$, $\Y=\R$, ${\Phi}(x)=(-\infty,x]$,
$u(x,y)=|\min\{x,y+1\}|+x$, $x\in\X$, $y\in\Y$. Then
\[
{\Phi}^*(x)=\left\{
\begin{array}{ll}
[-1,0],&x=0,\\
\{-1\},&x> 0,
\end{array}
\right. \quad\mbox{and}\quad v(x)=x.
\]
Therefore, ${\Phi}^*:\X\to \K(\X)$ is not lower semi-continuous.}
\end{example}

The following example shows that continuity properties of $\Phi^*$
may not hold either under the assumptions of Theorem~\ref{MT1} or
under the stronger assumptions of Berge's theorem.

\begin{example}\label{exa3}
{\rm The function $v(\cdot)$ is inf-compact on $\X$; the real
function $u(\cdot,\cdot)$ is inf-compact on ${\rm Gr}_{\X}(\Phi)$,
but it is not upper semi-continuous on ${\rm Gr}_{\X}(\Phi)$; the
set-valued mapping $\Phi:\X\to \K(\Y)$ is continuous; the
set-valued mapping $\Phi^*:\X\to \K(\Y)$ is neither lower
semi-continuous nor upper semi-continuous, and ${\rm
Gr}_\X(\Phi^*)$ is not closed. Let $\X=[0,1]$, $\Y=[-1,1]$,
${\Phi}(x)=\Y$,
\[
u(x,y)=
\left\{
\begin{array}{llll}
0,&x=0&\mbox{and}&y\in[-1,0],\\
y,&x=0&\mbox{and}&y\in(0,1],\\
1-y,&x\in(0,1]&\mbox{and}&y\in[-1,0],\\
1,&x\in(0,1]&\mbox{and}&y\in(0,1].\\
\end{array}\right.
\]
Then
\[
{\Phi}^*(x)=
\left\{
 \begin{array}{ll}
   [-1,0], & x=0, \\
   \ [0,1], & x\in(0,1],
    \end{array}
     \right.
     \quad\mbox{and}\quad
     v(x)=\left\{
           \begin{array}{ll}
             0, & x=0,\\
             1, & x\in(0,1].
              \end{array}
               \right.
\]
Therefore, ${\Phi}^*:\X\to \K(\X)$ is neither lower
semi-continuous nor upper semi-continuous, and ${\rm
Gr}_\X(\Phi^*)$ is not closed.}
\end{example}

\vspace{.3cm}
 {\bf Acknowledgements.}
 Research of the first
author was partially supported by NSF grants  CMMI-0900206 and
CMMI-0928490. The authors thank Professor Michael~Zgurovsky for
initiating their research cooperation.

\end{document}